\newtheorem{thm}{Theorem}
\newtheorem{lem}{Lemma}
\newtheorem{crl}{Corollary}
\newtheorem{rk}{Remark}
\theoremstyle{definition}
\newtheorem{dfn}{Definition}
\def\:{\colon}
\def\R{{\mathbb R}}
\def\Z{{\mathbb Z}}
\def\0{{\mathbf 0}}
\def\1{{\mathbf 1}}
\def\R{{\mathbb R}}
\title{A One-term Parity Bracket for Braids}
\author{Vassily Olegovich Manturov\footnote{The author is partially
supported by Laboratory of Quantum Topology of Chelyabinsk State
University (Russian Federation government grant 14.Z50.31.0020), by
RF President NSh — 1410.2012.1,
 and
by grants of the Russian Foundation for Basic Resarch,
13-01-00830,14-01-91161, 14-01-31288.}}
\date{}
 \def\R{{\mathbb R}}
 \def\Z{{\mathbb Z}}
\begin{document}
% \magstep2

\title{One-Term Parity Bracket For Braids}

\maketitle

\begin{abstract}In previous papers (see e.g., \cite{Parity}), the author realized
the following principle for many knot theories: {\em if a knot
diagram is complicated enough then it reproduces itself, i.e., is a
subdiagram of any other diagram equivalent to it}. This principle is
realized by diagram-valued invariants $[\cdot]$ of knots such that
$[K]=K$ for $K$ complicated enough.

It turns out that in the case of free braids, the same principle can
be realized in an unexpectedly easy way by a one-term invariant
formula.

 {\bf Keywords:} Braid, Knot, Parity, Bracket.
\end{abstract}

{\bf AMS MSC} 05C83, 57M25, 57M27

{\em To Lou Kauffman on the occasion of his 70-th birthday}

\section{Introduction}

Free knots \cite{Parity} (and later, free braids, \cite{IMN})
appeared as the simplest natural simplification of virtual knots
(and braids): at each virtual crossing we keep only the information
that this crossing exists, and the moves are natural general
position intersection moves as they appear for curves in
$2$-surfaces: the three Reidemeister moves for knots and the
braid-like second and third Reidemeister moves for braids.

This objects (without over/undercrossings) turned out to be
extremely non-trivial and carrying very important information about
virtual knots/braids and other objects which could be represented by
curves with generic intersections.

 Assume topological {\em objects} (knots, braids,
etc.) are encoded by {\em diagrams} (words) modulo {\em moves}
(relations). It turns out that in many situations {\em if an object
is complicated enough} then it appears as a sub-object of every
object equivalent to it.

In \cite{Parity}, the author introduced the study of {\em parity}
into knot theory; the parity is a sophisticated way of
distinguishing between {\em even} and {\em odd} nodes (crossings,
letters) which behave nicely under {\em moves} (relations).

In \cite{Parity}, the above principle  is first realized for {\em
free knots} where ``complicated enough'' means {\em irreducible} (in
some natural sense) and odd (with all nodes being odd).

In the present paper, we give a similar but much simpler
construction for {\em free braids.} Free braids are a simplification
of virtual braids; without giving a definition of virtual braids, we
say that virtual braids have a natural homomorphism onto the set of
free braids with the same number of strands.

Denote by ${\cal F}_{n}$ the group generated by $\zeta_{1},\dots,
\zeta_{n-1},\tau_{1},\dots, \tau_{n-1}$ subject to the following
relations:

\begin{enumerate}

\item (Second Reidemeister move) $\tau_{i}^{2}=1,\zeta_{i}^{2}=1, i=1,\dots, (n-1)$;

\item (Virtualization) $\tau_{i}\zeta_{i}=\zeta_{i}\tau_{i}, i=1,\dots,
(n-1)$;

\item (Far commutativity)
$\zeta_{i}\zeta_{j}=\zeta_{j}\zeta_{i};$\;$\zeta_{i}\tau_{j}=\tau_{j}\zeta_{i}$,$\tau_{i}\tau_{j}=\tau_{j}\tau_{i},
i,j=1,\dots, (n-1),|i-j|\ge 2$;

\item (Virtual third Reidemeister move): $$\tau_{i}\tau_{i+1}\tau_{i}=\tau_{i+1}\tau_{i}\tau_{i+1}, i=1,\dots, n-2;$$

\item (Semivirtual third Reidemeister move): $$\tau_{i}\tau_{i+1}\zeta_{i}=\zeta_{i+1}\tau_{i}\tau_{i+1},i=1,\dots, (n-2).$$

\end{enumerate}

Here $\zeta_{i},i=1,\dots, n-1$ are called {\em classical
generators}; $\tau_{i},i=1,\dots, (n-1)$ are called {\em virtual
generators}. Respectively, the second Reidemeister move which deals
with $\zeta_{i}$ is called {\em classical} and the one which deals
with $\tau_{i}$, is called {\em virtual}.

\begin{dfn}
The {\em free $n$-strand braid group} ${\cal FB}_{n}$ is the
quotient group of the group ${\cal F}_{n}$ modulo {\em the third
Reidemeister moves}:
$$\zeta_{i}\zeta_{i+1}\zeta_{i}=\zeta_{i+1}\zeta_{i}\zeta_{i+1},i=1,\dots, n-2.$$
\end{dfn}

A word in $\zeta_{1},\dots, \zeta_{n-1},\tau_{1},\dots, \tau_{n-1}$
will be called an {\em $n$-strand free braid-word} or, for brevity,
just {\em braid word} or {\em word} when the number of strands is
clear from the context.

A {\em free $n$-strand braid} is an element of ${\cal FB}_{n}$.

Analogously, by a {\em cyclic braid-word} we mean a free braid-word
considered up to the cyclic permutation of letters. A {\em cyclic
$n$-strand free braid} is a conjugacy class of the group ${\cal
FB}_{n}$.

\begin{dfn}
Having a braid $\beta$, we denote the corresponding cyclic braid by
$cl(\beta)$ and call it the {\em closure} of $\beta$.
\end{dfn}

With each generator $\zeta_{i}$ or $\tau_{i}$ we associate a diagram
in $\R^{1}_{x}\times [0,1]_{y}$ consisting of $n-2$ vertical lines
connecting points $(j,0)$ to $(j,1)$, $j\neq i,j\neq i+1$ and two
straight lines connecting $(i,0)$ to $(i+1,1)$ and $(i,1)$ to
$(i+1,0)$. The intersection point is encircled in the case of the
virtual generator $\tau_{i}$ and is marked by a solid dot for the
classical generator $\zeta_{i}$. Every free braid-word $\beta$ in
$\zeta_{i},\tau_{i}$ can be depicted by a diagram on $n$ strands by
reading it from the top to the bottom, juxtaposing and rescaling the
pictures corresponding to generators of the braid-word. Thus, having
a word in $k$ letters, we get $k$ crossings in the layers
$\frac{k-1}{k}\le y\le 1, \frac{k-2}{k}\le y\le \frac{k-1}{k},\dots,
0\le y\le \frac{1}{k}$.

Thus, each diagram of a free braid consists of {\em strands} passing
through {\em crossings}: there are $n$ strands starting from
$(1,1),\dots, (n,1)$ and going downwards; for each generator
$\sigma_{i}$ or $\zeta_{i}$, some two strands intersect at this
crossing. Note that the numbers of these two strands passing through
a crossing $\zeta_{i}$ in a braid-word $\beta$ can be arbitrary
since they are counted not locally but according to their endpoints
for $y=1$.

Analogously, for cyclic braids, we can define a diagram not in
$\R^{2}$ but in $\R^{1}_{x}\times S^{1}_{y}$, where $S^{1}$ is the
circle obtained by identifying the ends of the interval.

This naturally defines the {\em permutation} $P(\beta)$ of the
braid-word $\beta$: if the braid connects the upper end $(k,1)$ to
$(f(k),0)$, then $P(\beta)$ takes $k$ to $f(k)$ for $k=1,\dots,
n-1$.

When we pass from a braid word $\beta$ to its closure $cl(\beta)$,
{\em strands} connect to each other and close up to some circles:
the number of circles is equal to the number of cycles of
$P(\beta)$. In particular, if $P(\beta)$ is cyclic, we have exactly
one cycle.

\begin{figure}
\centering\includegraphics[width=200pt]{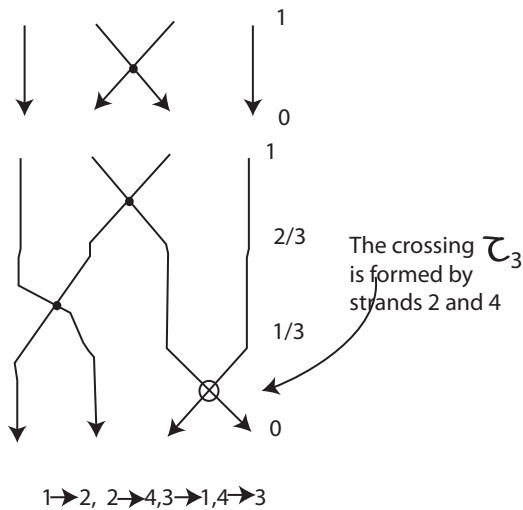} \caption{A
braid generator; a braid; its permutation} \label{braidmoves}
\end{figure}

\begin{dfn}
A {\em chord diagram} is a $3$-regular graph with a selected
oriented cycle which passes through all vertices; this cycle is
called the {\em core} of the chord diagram; the remaining edges are
called {\em chords} of the chord diagram; chords are not oriented.

Two chord diagrams are considered up to a homeomorphism of the core
circle which takes core circle to core circle and preserves the
orientation.
\end{dfn}

In the case of free braids with cyclic permutation we can define the
{\em chord diagram} $C(cl(\beta))$ as follows. The whole diagram
$cl(\beta)$ can be considered as the image of map $f:S^{1}_{\phi}
\to cl(\beta)$; the circle is oriented according to the orientation
of strands (from the top to the bottom) having classical and virtual
crossings, so one segment of the circle, say $[0,\frac{1}{n}]$ is
mapped to the first strand from $y=1$ to $y=0$, the second segment
(say, from $[\frac{1}{n},\frac{2}{n}]$) is mapped to the strand
which connects the end of the first strand to the beginning of the
second strand, etc.
 This map is bijective outside preimages of crossings. For each
classical crossing $x$, we have exactly two preimages
$x_{1},x_{2}\in S^{1}$. Thus, we take all classical crossings and
connect the corresponding pairs of points by chords; this leads us
to a chord diagram. Certainly, the parametrization change of the
circle $S^{1}$ does not change the equivalence class of the
resulting chord diagram.

Note that we disregard virtual crossings when constructing chord
diagrams.

\begin{dfn}
We say that two chords $c,d$ of a chord diagram $D$ are {\em linked}
if two ends of $d$ belong to different components of the complement
$C\backslash c$ where $C$ is the core circle of $D$.
\end{dfn}

In the sequel, we shall need {\em permutation braids}. Namely, with
each permutation $P:(1\to p(1),\dots n\to p(n))$, one can associate
a braid diagram connecting $(k,1)$ with $(p(k),0)$ and having only
virtual crossings $\tau_{i}$. It follows from the definition that
for a fixed $P$, all such braid diagrams are equivalent by moves
which deal with virtual crossings only. Denote this braid by
$\beta_{P}$.

\begin{rk}
Note that the relation
$\tau_{i+1}\tau_{i}\zeta_{i+1}=\zeta_{i}\tau_{i+1}\tau_{i}$ is not
in the list because it can be expressed in terms of the relations
for ${\cal F}_{n}$.
\end{rk}

So, the relations for ${\cal F}_{n}$ or ${\cal FB}_{n}$ admit a
geometrical interpretation in terms of moves.

We shall often say {\em crossing} instead of {\em letter
(generator)} when it does not cause any confusion.

%\begin{dfn}
%A {\em free braid} is an element of the free braid group;
%equivalently: a {\em free braid} is an equivalence class of free
%braid-words modulo relations or a free braid diagrams modulo moves
%from ${\cal FB}_{n}$.
%\end{dfn}

\begin{dfn}
A free braid diagram is {\em pure} if its permutation is the
identity; a free braid is {\em pure} if some (hence, all) braid
diagrams representing it are pure.
\end{dfn}

\begin{dfn}
A {\em cyclic free braid} is a conjugacy class of free braids.
\end{dfn}

One can naturally interpret closures of braids as diagrams of {\em
free knots} or {\em free links}: each closed strand gives rise to a
knot (link) component; however, to define free knots (links) one
needs additional moves which do not originate from braids.

Let $B$ be some class (set) of free braids. For example, we can take
$B$ all {\em pure braids} or all braids having permutations from
some fixed set.
 By a {\em parity} for braids from $B$
 we mean a way of associating
elements from $\Z_{2}$ with all classical crossings of all
braid-words $\beta$ representing braids from $B$ such that:

\begin{enumerate}

\item If two braid words $ApB\to AqB$ are obtained from one another by applying a
defining relation $p\to q$ from the list for ${\cal FB}_{n}$, then
parities of all crossings not taking part in these relations (i.e.,
crossings belonging to $A$ or to $B$) do not change.

\item When applying $p=\zeta_{i}\zeta_{j} \to q=\zeta_{j}\zeta_{i}, |i-j|\ge 2$,  the parity of the crossing $\zeta_{i}$
on the left hand side coincides with the parity on the right hand
side. The same holds for $\zeta_{j}$;

\item Analogously, for $p=\zeta_{i}\tau_{j}\to q=\tau_{j}\zeta_{i}, |i-j|\ge 2$ the
parity of $\zeta_{i}$ does not change;

\item For the classical second Reidemeister move $p=\zeta_{i}^{2}\to
1=q$ both $\zeta_{i}$ on the LHS are of the same parity;

\item For the relation
$p=\zeta_{i}\zeta_{i+1}\zeta_{i}\to\zeta_{i+1}\zeta_{i}\zeta_{i+1}=q$
we require that:

\begin{enumerate}

\item The number of odd crossings among
$\zeta_{i},\zeta_{i+1},\zeta_{i}$ on the LHS is even.

\item The parity of the upper $\zeta_{i}$ on the LHS coincides
with the parity of the lower $\zeta_{i+1}$ on the RHS;

\item The parity of the middle $\zeta_{i+1}$ on the LHS coincides
with that of the middle $\zeta_{i}$ on the RHS;

\item The parity of the lower $\zeta_{i}$ on the LHS coincides with
that of the upper $\zeta_{i+1}$ on the RHS.

\end{enumerate}

\item For the relation $p=\tau_{i}\tau_{i+1}\zeta_{i} \to q=\zeta_{i+1}\tau_{i}\tau_{i+1},$
the parity of $\zeta_{i}$ on the LHS coincides with the parity of
$\zeta_{i+1}$ on the right hand side.

\item For the virtualization relation
$\zeta_{i}\tau_{i}=\tau_{i}\zeta_{i}$, the parity of $\zeta_{i}$
does not change.

\end{enumerate}

\begin{rk}
Note that in \cite{Parity} and subsequent papers, the diagrams do
not take into account virtual crossings, and parities are defined by
using classical crossings only.
\end{rk}

Let us now define some parities. For all braids, one can define the
{\em component-wise} parities as follows. Let us split the set
$N=\{1,\dots, n\}$ of indices into two disjoint subsets
$N=N_{1}\sqcup N_{2}$. Now, every crossing formed by two strands
from the same subset $N_{i}$ is {\em even}. Every crossing formed by
two strands from different subsets $N_{1}$ and $N_{2}$ is {\em odd.}

Now, fix two permutations $P$ and $Q$ such that $P\circ Q$ is a
cyclic permutation. Then, for all braids having permutation $P$, we
define the {\em $Q$-Gaussian} parity as follows.

Let $\beta$ be a braid with permutation $P$. Consider the product
$\beta\cdot \beta_{Q}$ where $\beta_{Q}$ is the permutation braid
corresponding to $Q$. The resulting braid $\beta\cdot\beta_{Q}$ is
cyclic, thus, $cl(\beta\cdot \beta_{Q})$ has one strand.

We get a chord diagram $C(cl(\beta\cdot\beta_{Q}))$, where chords
correspond to classical crossings of $\beta$. We say that a
classical crossing of $\beta$ is even if the corresponding chord is
linked with evenly many chords.

The proof of the fact that these parities satisfy all parity axioms
are a slight modification of a similar proof for free links from
\cite{Parity}. They are left to the reader as exercises.

\section{The Main Invariant}

\begin{dfn} Let $p$ be a parity. Let the {\em one-term parity
bracket} for an $n$-strand braid word $\beta$ be the $n$-strand
braid word $[\beta]_{p}$ obtained from $\beta$ by removing all even
letters $\zeta_{i}$.
\end{dfn}

\begin{thm}
The map $\beta \to [\beta]_{p}$ is a well defined map from the set
of free braids (for which $p$ is defined) to ${\cal F}_{n}$; in
other words, if $\beta$ and $\beta'$ are equal as elements of ${\cal
FB}_{n}$, then $[\beta]_{p}$ and $[\beta']_{p}$ are equal as
elements of ${\cal F}_{n}$.
\end{thm}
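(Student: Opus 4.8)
The plan is to verify that $[\cdot]_p$ respects each defining relation of ${\cal FB}_n$, so that it descends to a well-defined map on the quotient. Since $\beta = \beta'$ in ${\cal FB}_n$ means the two words are connected by a finite sequence of applications of the relations (1)--(5) of ${\cal F}_n$ together with the classical third Reidemeister relation, it suffices to show: whenever $\beta = ApB$ and $\beta' = AqB$ for a defining relation $p \leftrightarrow q$, we have $[\beta]_p = [\beta']_p$ as elements of ${\cal F}_n$. By parity axiom (1), the parities of the crossings in $A$ and $B$ are unchanged, so $[A]_p$ and $[B]_p$ are literally the same words on both sides; hence it reduces to a purely local statement: $[p]_p$ and $[q]_p$, the subwords obtained by deleting the even classical letters occurring in $p$ and $q$ respectively, are equal in ${\cal F}_n$. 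So the whole proof is a finite case check, one case per relation type.

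Next I would go through the cases. The relations involving only $\tau$'s (virtual $R2$, virtual $R3$, far commutativity among $\tau$'s) delete nothing and hold trivially. For far commutativity $\zeta_i\zeta_j \leftrightarrow \zeta_j\zeta_i$ and $\zeta_i\tau_j \leftrightarrow \tau_j\zeta_i$ with $|i-j|\ge 2$: by the parity axioms the parity of each $\zeta$ is preserved, so either a $\zeta$ survives on both sides (and then we still have $\zeta_i\zeta_j \leftrightarrow \zeta_j\zeta_i$ or $\zeta_i\tau_j \leftrightarrow \tau_j\zeta_i$, a relation of ${\cal F}_n$) or it is deleted on both sides. Similarly for virtualization $\zeta_i\tau_i \leftrightarrow \tau_i\zeta_i$ and the semivirtual move $\tau_i\tau_{i+1}\zeta_i \leftrightarrow \zeta_{i+1}\tau_i\tau_{i+1}$: the parity axioms are exactly tailored so that the $\zeta$ is either kept on both sides (yielding again the same relation, possibly with index shifted from $i$ to $i+1$ as the axiom dictates) or deleted on both sides (leaving $\tau_i \leftrightarrow \tau_i$, resp. $\tau_i\tau_{i+1} \leftrightarrow \tau_i\tau_{i+1}$). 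For the classical $R2$, $\zeta_i^2 \leftrightarrow 1$: axiom (4) forces both $\zeta_i$ to have the same parity, so either both are deleted (giving $1 \leftrightarrow 1$) or both survive (giving $\zeta_i^2 \leftrightarrow \zeta_i^2$, still the classical $R2$ relation — wait, but the target is ${\cal F}_n$ where $\zeta_i^2 = 1$ is a relation, so this is fine, or rather we never need to invoke it since it is genuine equality).

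The main case, and the real content, is the classical third Reidemeister move $\zeta_i\zeta_{i+1}\zeta_i \leftrightarrow \zeta_{i+1}\zeta_i\zeta_{i+1}$, governed by axiom (5). By (5a) the number of odd crossings among the three $\zeta$'s on the left is even, hence $0$ or $2$; by (5b)--(5d) the parities are matched up across the two sides in the pattern (top-left $\leftrightarrow$ bottom-right, middle $\leftrightarrow$ middle, bottom-left $\leftrightarrow$ top-right), so the right side also has an even number of odd crossings in the same positions. If all three are even, both sides delete to the empty word, $1 \leftrightarrow 1$. If exactly two are odd, exactly one survives on each side; I must check that in each of the three sub-subcases the single surviving generator is the same on both sides — e.g. if the two odd ones on the left are the top $\zeta_i$ and middle $\zeta_{i+1}$, then the survivor on the left is the bottom $\zeta_i$ (even — no: the survivors are the odd ones). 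Let me restate: odd crossings survive. If the odd ones are, say, the top-left $\zeta_i$ and the middle $\zeta_{i+1}$, the left side reduces to $\zeta_i\zeta_{i+1}$; by (5b) the top-left $\zeta_i$ matches the bottom-right $\zeta_{i+1}$ and by (5c) the middle $\zeta_{i+1}$ matches the middle $\zeta_i$, so on the right the odd ones are the middle $\zeta_i$ and bottom $\zeta_{i+1}$, giving $\zeta_i\zeta_{i+1}$ — equal. The other two sub-subcases (odd pair = top-left and bottom-left; odd pair = middle and bottom-left) reduce, respectively, to $\zeta_i\zeta_i \leftrightarrow \zeta_{i+1}\zeta_{i+1}$ and to $\zeta_{i+1}\zeta_i \leftrightarrow \zeta_{i+1}\zeta_i$; the first of these is \emph{not} a literal equality of words, and this is the one genuinely delicate point — one must use the classical $R2$ relation $\zeta_i^2 = 1 = \zeta_{i+1}^2$ available in ${\cal F}_n$ to conclude $\zeta_i\zeta_i = 1 = \zeta_{i+1}\zeta_{i+1}$. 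I expect this sub-subcase — where the bracket produces $\zeta_i^2$ versus $\zeta_{i+1}^2$ and we must appeal to $R2$ in the target group rather than to word equality — to be the main subtlety of the argument; everything else is bookkeeping directly encoded in the parity axioms.
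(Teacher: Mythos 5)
Your proposal is correct and follows essentially the same route as the paper: reduce to a local check for each defining relation (parities in $A$ and $B$ being untouched by axiom (1)), then run the case analysis, with the one genuinely delicate subcase of the classical third Reidemeister move (both $\zeta_i$ odd, $\zeta_{i+1}$ even) giving $\zeta_i^2$ versus $\zeta_{i+1}^2$ and being resolved by the classical second Reidemeister relation in ${\cal F}_n$ --- exactly as in the paper. The only small slip is in your classical $R2$ case, where $\tilde q$ is the empty word (not $\zeta_i^2$) when both crossings are odd, so one does invoke $\zeta_i^2=1$ in ${\cal F}_n$; your parenthetical already contains the right justification, so this is cosmetic.
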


\begin{proof}
Assume $\beta_{1}=Ar_{1}B, \beta_{2}=Ar_{2}B,$ where $r_{1}\to
r_{2}$ is some relation for ${\cal FB}_{n}$.

Then $[\beta_{1}]_{p}= {\tilde A}{\tilde r_{1}}{\tilde B},
[\beta_{2}]_{p}={\tilde A}{\tilde r_{2}}{\tilde B},$ where ${\tilde
A}$ and ${\tilde B}$ are obtained from $A$ and $B$ by removing even
letters; the rule for defining even or odd letters is the same for
$\beta_{1}$ and $\beta_{2}$.

Thus, it remains to show that ${\tilde r_{1}}$ and ${\tilde r_{2}}$
are equivalent as elements from ${\cal F}_{n}$.

Indeed, let us consider the relations from ${\cal F}_{n}$.

The far commutativity relations $r_{1}\to r_{2}$ yield either far
commutativity or the identity depending on the parity of crossings.
The virtualization move yields either virtualization or the
identity. The moves $\tau_{i}^{2}=1$ always yield $\tau_{i}^{2}=1$;
the move $\zeta_{i}^{2}=1$ yields either the identity or
$\zeta_{i}^{2}=1$ depending on the parity; the third virtual
Reidemeister move always yields the third virtual Reidemeister move.
The third semivirtual Reidemeister move yields either the identity
or the third semivirtual Reidemeister move.

Finally, the third classical Reidemeister move for three even
crossings leads to the identity since the words ${\tilde r_{1}}$ and
${\tilde r_{2}}$ are both empty.

Now, if for  $r_{1}=\zeta_{i}\zeta_{i+1}\zeta_{i}$ the last letter
$\zeta_{i}$ is even, and the other two letters are odd, we see that
${\tilde r_{1}}=\zeta_{i}\zeta_{i+1}={\tilde r_{2}}$; if the first
$\zeta_{i}$ in $r_{1}$ is even and the other two letters are odd,
then ${\tilde r_{1}}=\zeta_{i+1}\zeta_{i}={\tilde r_{2}}$; finally,
if $\zeta_{i+1}$ in $r_{1}$ is even and both $\zeta_{i}$ are odd, we
see that ${\tilde r_{1}}=\zeta_{i}^{2}$ and ${\tilde
r_{2}}=\zeta_{i+1}^{2}$; these two words are equivalent by the
second Reidemeister classical moves.

Now, if $\beta$ and $\beta'$ are equivalent as elements of ${\cal
FB}_{n}$ then this equivalence can be represented as a sequence
$\beta'=\beta_{1}\to \beta_{2}\to\cdots\to \beta_{l}=\beta'$ where
each two neighbouring $\beta_{i}$ and $\beta_{j}$ are related as
described above; thus, $[\beta]_{p}=[\beta']_{p}.$
\end{proof}

The following important fact follows from the definition.
\begin{crl}
Let $p$ be a parity. Let $\beta$ be a free $n$-strand braid-word
with all odd crossings with respect to $p$. Then

$$[\beta]_{p}=\beta.$$

Here on the left hand side, $\beta$ is considered as an element of
${\cal FB}_{n}$, and on the right hand side $\beta$ is an element of
${\cal F}_{n}$. \label{maincrl}
\end{crl}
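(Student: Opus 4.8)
The plan is to read the statement off directly from the definition of the bracket together with Theorem 1. First I would recall that, by definition, $[\beta]_{p}$ is the braid word obtained from $\beta$ by deleting every \emph{even} classical letter $\zeta_{i}$. Since by hypothesis every classical crossing of the word $\beta$ is odd with respect to $p$, there are no letters to delete, so the two braid words $[\beta]_{p}$ and $\beta$ literally coincide; in particular they represent one and the same element of ${\cal F}_{n}$.

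Second, I would invoke Theorem 1: the assignment $\beta\mapsto[\beta]_{p}$ descends to a well-defined map from the set of free braids on which $p$ is defined into ${\cal F}_{n}$, that is, its value depends only on the class of $\beta$ in ${\cal FB}_{n}$ and not on the chosen representative word. Combining the two observations, the element of ${\cal F}_{n}$ attached by the bracket map to the free braid $\beta$ equals the element of ${\cal F}_{n}$ represented by the word $\beta$; this is exactly the asserted equality $[\beta]_{p}=\beta$, the left-hand side being read via the bracket on ${\cal FB}_{n}$ and the right-hand side in ${\cal F}_{n}$.

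There is essentially no obstacle here; the only point deserving a word of care is the bookkeeping of which group each side lives in. On the left, $\beta$ is an element of ${\cal FB}_{n}$ and hence has many representative words, and the content of Theorem 1 is precisely that applying $[\cdot]_{p}$ to any of them produces the same element of ${\cal F}_{n}$. On the right, $\beta$ denotes the fixed word viewed in ${\cal F}_{n}$. The all-odd hypothesis makes the bracket act as the identity on this particular word, and Theorem 1 guarantees that this outcome is independent of the chosen representative, so the corollary follows.
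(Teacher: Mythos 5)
Your argument is correct and is exactly the reasoning the paper intends: the paper states the corollary "follows from the definition," namely that an all-odd word loses no letters under the bracket, with Theorem 1 supplying the independence of the representative. Your write-up just makes this explicit, so there is nothing to add.
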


It turns out that the word problem for ${\cal F}_{n}$ is extremely
easy to solve.

\begin{dfn}
We say that two braid-words $\beta_{1}$ and $\beta_{2}$ are {\em
strongly equivalent} if they are equivalent by all moves from ${\cal
F}_{n}$ except the second classical Reidemeister moves
$\zeta_{i}^{2}=1$.
\end{dfn}

Every braid-word $b$ can be thought of as an immersion of a graph in
$\R^{2}$. This graph $\Gamma(b)$ has $2n$ vertices corresponding to
endpoints of $b$, and four-valent vertices corresponding to all {\em
classical} crossings of $b$. Virtual crossings are not vertices of
the graph; they just lie on edges of $\Gamma(b)$. Besides,
$\Gamma(b)$ is endowed with an additional information. All upper and
lower vertices are enumerated; all edges are oriented downwards.
Besides these ordering of final points and orientation of edges,
this graph also possesses the ordering: for each crossing we
indicate which edge coming to this crossing is opposite to which
edge emanating from this crossing downwards.

\begin{lem}
Two braid-words $\beta,\beta'$ are strongly equivalent if and only
if $\Gamma(b)$ is equivalent to $\Gamma(b')$ with all structures
(orientation, ordered upper vertices ordered lower vertices opposite
edges) preserved.
\end{lem}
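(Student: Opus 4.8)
The plan is to prove the two implications separately, reading ``$\Gamma(b)$ is equivalent to $\Gamma(b')$'' as the existence of an isomorphism of decorated graphs: a graph isomorphism carrying upper vertex $j$ to upper vertex $j$ and lower vertex $j$ to lower vertex $j$, respecting the downward orientation of every edge, and matching the opposite-edge pairing at each four-valent vertex. Observe first that such a $\Gamma(b)$ already records, for every strand, the ordered sequence of classical crossings it runs through (trace the strand using the opposite-edge pairings) and which two strands meet at each classical crossing; these are the only data that will be used. Note also that strong equivalence uses \emph{all} moves of ${\cal F}_n$ except $\zeta_i^2=1$, and that ${\cal F}_n$ contains no classical third Reidemeister relation --- it is precisely the absence of that relation that makes the statement true.

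For the ``only if'' direction I would check, move by move, that each relation permitted in strong equivalence preserves $\Gamma(b)$ up to such an isomorphism. For $\tau_i^2=1$, the virtual third Reidemeister move, and the far-commutativity relations among $\tau$'s there is nothing to prove: no classical crossing is touched and virtual crossings are not vertices of $\Gamma$ (they only lie on edges), so $\Gamma(b)$ is literally unchanged. In each remaining case there is exactly one classical crossing inside the relation, and a short position-tracking computation (which strand occupies which position before and after each generator) shows that this crossing joins the same pair of strands in the same place along each of them on both sides; the opposite-edge pairing then matches automatically, since at a classical crossing each strand crosses transversally straight through, so its in-edge is opposite its own out-edge. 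Thus the identity on all crossings of $A$ and $B$, together with the obvious matching of the single crossing inside the relation, is the required isomorphism, and composing along a sequence of moves finishes this direction.

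The ``if'' direction is the substance, and the main obstacle is the following rerouting principle, which I would isolate first: for virtual braid words $\gamma,\gamma',\gamma_1,\gamma_1'$ (words in the $\tau_i$ only) the words $\gamma\,\zeta_i\,\gamma'$ and $\gamma_1\,\zeta_{i'}\,\gamma_1'$ are strongly equivalent whenever their total permutations coincide and $\zeta_{i'}$ joins the same two strands as $\zeta_i$. Geometrically this merely says that a solid dot can be slid freely up and down through a tangle of virtual crossings: sliding it past a virtual crossing of one of its two strands with a third strand is an instance of the semivirtual third Reidemeister relation (or of virtualization, when the third strand is in fact its partner), and sliding it past a virtual crossing of two unrelated strands is far-commutativity. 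Combinatorially one proves it from those relations together with $\tau_{i+1}\tau_i\zeta_{i+1}=\zeta_i\tau_{i+1}\tau_i$ (the Remark) and the fact, immediate from the Coxeter presentation of the $\tau$-subgroup, that two virtual braid words with equal permutation are interrelated by $\tau$-moves alone. The delicate point, which I expect to be the only real work, is the bookkeeping: tracking strand positions and verifying that these relations genuinely realize every rerouting that fixes both the permutation and the marked pair of strands.

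Granting the rerouting principle, I would conclude as follows. Given any braid word $b$, group its maximal runs of consecutive $\tau_i$; since the $\tau_i$ satisfy exactly the Coxeter relations of $S_n$ inside ${\cal F}_n$, each run is strongly equivalent to a permutation braid, so $b$ is strongly equivalent to a staged word $\beta_{P_0}\zeta_{j_1}\beta_{P_1}\cdots\zeta_{j_k}\beta_{P_k}$. Its classical crossings $x_1,\dots,x_k$ form a linear extension of the partial order on the crossing set of $\Gamma(b)$ in which $x\prec y$ means ``some strand runs through $x$ and then through $y$'' (read off $\Gamma(b)$); two crossings are incomparable only when they involve four distinct strands, in which case bringing them to adjacent positions in a staged word forces their indices to differ by at least $2$, so adjacent transpositions of incomparable crossings are far-commutativities. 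Hence, using the rerouting principle to move a classical letter across an intervening permutation braid when needed, one may (i) reorder $x_1,\dots,x_k$ into any prescribed linear extension and (ii) replace each permutation braid between consecutive classical letters by a canonical one (say the braid bringing the two strands of the next crossing to positions $1,2$ and keeping all others in order), arriving at a normal form $N(\Gamma(b))$ that depends only on the isomorphism class of the decorated graph. Applying this to $b$ and to $b'$ yields $b\sim N(\Gamma(b))=N(\Gamma(b'))\sim b'$, completing the proof.
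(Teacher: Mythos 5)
The paper states this lemma with no proof at all, so there is nothing of the author's to compare your argument against; judged on its own, your proposal is a correct strategy and essentially a complete proof modulo one deferred computation. Two remarks. First, in the ``only if'' direction your case analysis has a small slip: the far-commutativity relation $\zeta_{i}\zeta_{j}=\zeta_{j}\zeta_{i}$, $|i-j|\ge 2$, contains \emph{two} classical crossings, so it is not covered by your ``exactly one classical crossing inside the relation'' clause; it is disposed of just as easily (the two crossings lie on four distinct strands, so exchanging them in the word changes the order of crossings along no strand and hence leaves $\Gamma$ untouched), but it needs to be said. Second, the rerouting principle is indeed the crux of the ``if'' direction, and you have named exactly the right ingredients: the deferred bookkeeping amounts to the inductive verification that for a $\tau$-word $w$ whose permutation carries the pair $\{i,i+1\}$ to $\{j,j+1\}$ one has $w\zeta_{i}w^{-1}=\zeta_{j}$ in ${\cal F}_{n}$, which follows from $\zeta_{i+1}=\tau_{i}\tau_{i+1}\zeta_{i}\tau_{i+1}\tau_{i}$ (the semivirtual relation read backwards), far commutativity, and $\tau_{i}\zeta_{i}\tau_{i}=\zeta_{i}$ (virtualization plus $\tau_{i}^{2}=1$, which also disposes of the left--right order of the two strands at the dot). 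Granting that routine induction, the rest of your normal-form argument --- reduction to staged words via the Coxeter presentation of the $\tau$-subgroup, identification of the crossing sequence with a linear extension of the strand-induced partial order, and realization of adjacent transpositions of incomparable crossings as far commutativity after rerouting them to positions differing by at least $2$ --- is sound and yields a canonical representative depending only on the decorated graph, which is exactly what the lemma asserts.
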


\begin{dfn}
Let $\beta$ be an $n$-strand braid-word. Let $x,x'$ be some two
classical crossings of a braid-word $\beta$ lying on the same
strands of $\beta$ (say, number $i$ and number $j$). We say that
$x,x'$ {\em form a bigon} if in $\beta$ there is no classical
crossing letter $\zeta_{k}$ between the two letters corresponding to
$x$ and to $x'$ and belonging to either $i$-th or $j$-th strand.

By the {\em bigon reduction} we mean the operation which deletes
$x,x'$ from $\beta$.

If $\beta'$ can be obtained from $\beta$ by a sequence of bigon
reductions, we say that $\beta'$ is a {\em descendant} of $\beta$
and write $\beta\to \beta'$.
\end{dfn}

It can be easily shown that the resulting braid $\beta'$ is
equivalent to $\beta$ in ${\cal F}_{n}$.

Let $\beta_{1}$ and $\beta_{2}$ be two strongly equivalent
braid-words. We have a bijection $u$ between the set of their
classical crossings. This bijection comes from the isomorphism
between graphs $\Gamma(\beta_{1})$ and $\Gamma(\beta_{2})$. All
bigons in the initial braids correspond to bigons in these graphs.
This obviously leads to the following

\begin{lem}
If two crossings $x_{1}$ and $x_{2}$ form a bigon, then $u(x_{1})$
and $u(x_{2})$ form a bigon, and the braid-words $\beta'_{1}$ and
$\beta'_{2}$ resulting from these bigon reductions are pairwise
strongly equivalent.
\end{lem}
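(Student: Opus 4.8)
The plan is to observe that ``forming a bigon'' is an intrinsic property of the structured graph $\Gamma(\beta)$, and that bigon reduction is a purely local surgery on this graph. Since the graph isomorphism underlying $u$ respects all the structure of $\Gamma$, it then automatically carries bigons to bigons and commutes with the surgery, and the preceding characterisation of strong equivalence in terms of $\Gamma$ closes the argument.

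First I would translate the notion of a bigon into graph language. A strand of $\beta$ is precisely a maximal oriented path in $\Gamma(\beta)$ which at each four-valent vertex leaves by the edge opposite to the one by which it entered; such a path runs from an upper vertex to a lower vertex, and along it the height coordinate $y$ strictly decreases, so the classical crossings lying on a fixed strand are linearly ordered exactly by their order of occurrence in $\beta$. Hence two classical crossings $x_1,x_2$ lying on strands $i$ and $j$ form a bigon (in the sense of the definition) if and only if in $\Gamma(\beta)$ the vertices $x_1$ and $x_2$ are joined by an edge of strand $i$ and by an edge of strand $j$ --- equivalently, $x_1$ and $x_2$ are consecutive among the classical crossings of strand $i$ and also consecutive among those of strand $j$; this is the correspondence between bigons of braid-words and bigons of their graphs alluded to just above. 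Reading off heights, the edge on strand $i$ joining $x_1$ and $x_2$ leaves the higher vertex as an out-edge and enters the lower one as an in-edge, and the same holds on strand $j$; say $x_1$ is the higher vertex on both strands. Now the isomorphism $\Gamma(\beta_1)\cong\Gamma(\beta_2)$ underlying $u$ sends four-valent vertices to four-valent vertices, edges to edges, and preserves orientation, the numbering of the upper vertices, the numbering of the lower vertices, and the opposite-edge data at every vertex; following opposite edges, it therefore sends the strand of $\beta_1$ starting at upper vertex $k$ to the strand of $\beta_2$ starting at upper vertex $k$. Consequently it carries the pair of edges of $\Gamma(\beta_1)$ joining $x_1$ to $x_2$ (one on strand $i$, one on strand $j$) to a pair of edges of $\Gamma(\beta_2)$ joining $u(x_1)$ to $u(x_2)$, again one on strand $i$ and one on strand $j$; by the reformulation above, $u(x_1)$ and $u(x_2)$ form a bigon, carried by the same two strands.

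It remains to handle the reductions, which I would realise on the level of $\Gamma$. Delete the two vertices $x_1,x_2$ together with the two edges of the bigon; since at $x_1$ these two edges are exactly its two out-edges and at $x_2$ exactly its two in-edges, what is left locally are the two in-edges at $x_1$ (one on strand $i$, one on strand $j$) and the two out-edges at $x_2$ (likewise), and one splices the strand-$i$ in-edge of $x_1$ to the strand-$i$ out-edge of $x_2$ and similarly for strand $j$. With the inherited orientation, vertex numbering and opposite-edge data, the resulting structured graph is exactly $\Gamma(\beta_1')$, and the analogous description holds for $\Gamma(\beta_2')$. This surgery is specified entirely by the structured-graph data in a neighbourhood of the bigon, so it commutes with any structure-preserving isomorphism; applying it to $\Gamma(\beta_1)\cong\Gamma(\beta_2)$ at the corresponding bigons yields a structure-preserving isomorphism $\Gamma(\beta_1')\cong\Gamma(\beta_2')$. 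By the lemma characterising strong equivalence via $\Gamma$, $\beta_1'$ and $\beta_2'$ are strongly equivalent.

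The one genuinely delicate point is the bookkeeping in this last step: one must verify that re-splicing really does reproduce the graph of the bigon-reduced word --- in particular that no spurious structure is created at the junctions, and that the degenerate cases (a spliced edge running directly between two endpoint vertices, or strands $i$ and $j$ meeting the rest of the diagram in various ways) are covered --- and, crucially, that the whole recipe is manifestly natural in the structured graph, so that it is executed identically on $\Gamma(\beta_1)$ and $\Gamma(\beta_2)$. Everything else is the routine dictionary between braid-words and their graphs set up above.
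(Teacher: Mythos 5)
Your proposal is correct and follows essentially the route the paper itself takes: the paper states this lemma as an ``obvious'' consequence of the fact that $u$ comes from a structure-preserving isomorphism $\Gamma(\beta_{1})\cong\Gamma(\beta_{2})$ and that bigons and bigon reductions correspond to local data and local surgery on these structured graphs, which is exactly the dictionary you spell out. Your write-up merely supplies the details (strands as opposite-edge paths, bigons as pairs of parallel edges, reduction as splicing) that the paper leaves implicit, so there is nothing to correct.
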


\begin{dfn}
We say that a braid-word $\beta$ in ${\cal F}_{n}$ is {\em
irreducible} if it admits no bigon reduction.
\end{dfn}

Note that the second classical Reidemeister move is a partial case
of the bigon reduction.

\begin{lem}
Assume two classical crossings $x,x'$ of $\beta$ form a bigon with
the bigon reduction $\beta\to \beta'$ and $x,x''$ form a bigon of
$\beta$ with the bigon reduction $\beta\to \beta''$. Then the
resulting braid-words $\beta'$ and $\beta''$ are strongly
equivalent.
\end{lem}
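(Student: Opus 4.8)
The plan is to reduce the statement to the combinatorial criterion for strong equivalence in terms of the graphs $\Gamma(\cdot)$ and then to produce an explicit isomorphism $\Gamma(\beta')\cong\Gamma(\beta'')$ preserving all the structure.

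I would begin by normalising the picture. If $x'=x''$ there is nothing to prove, so assume $x'\neq x''$. Since $x$ lies on a pair of strands $i,j$ and, by the definition of a bigon, any bigon partner of $x$ lies on the \emph{same} pair, both $x'$ and $x''$ lie on strands $i$ and $j$; thus $x,x',x''$ are three classical crossings all of whose arcs run on strands $i$ and $j$. Next, after possibly interchanging the names $x'$ and $x''$, the letter of $x'$ precedes that of $x$ in $\beta$ and the letter of $x''$ follows it: if $x'$ and $x''$ lay on the same side of $x$ in $\beta$, then the one farther from $x$ would be a classical crossing on strand $i$ (and $j$) lying strictly between $x$ and the other, contradicting one of the two bigon hypotheses. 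In the language of $\Gamma(\beta)$ this is clean: the two edges of a bigon are both outgoing at its higher vertex and both incoming at its lower one, while $x$ has only two incoming and two outgoing edge-ends, so one bigon lies above $x$ and the other below; geometrically, $x',x,x''$ are simply three consecutive crossings of the two strands $i,j$, and each bigon reduction cancels two of the three.

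By the lemma characterising strong equivalence via $\Gamma$, it now suffices to show that $\Gamma(\beta')$ and $\Gamma(\beta'')$ are equivalent with all structures preserved. The graph $\Gamma(\beta')$ is obtained from $\Gamma(\beta)$ by deleting the vertices $x',x$ and the two edges joining them, and re-connecting each edge that entered $x'$ from above to the edge leaving $x$ from below that lies on the same strand — the $i$-with-$i$, $j$-with-$j$ matching being forced by the opposite-edge data at the two deleted vertices; the crossing $x''$ is untouched and is now joined, above, to whatever lay above $x'$, and, below, to whatever lay below $x''$. Symmetrically, $\Gamma(\beta'')$ is obtained by deleting $x,x''$; here the surviving crossing is $x'$, again joined above to whatever lay above $x'$ and below to whatever lay below $x''$. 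Outside the configuration $\{x',x,x''\}$ both graphs literally agree with $\Gamma(\beta)$, so I would take the candidate isomorphism to be the identity there, to send $x''\mapsto x'$, and to send the four half-edges at the surviving crossing of $\Gamma(\beta')$ to those at the surviving crossing of $\Gamma(\beta'')$, matching ``strand $i$ from above'' with ``strand $i$ from above'', and likewise for the other three.

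The remaining work is to check that this map respects every recorded structure: the downward orientation of all edges (the roles ``above'' and ``below'' being preserved), the ordered sets of the $n$ top and $n$ bottom endpoints (none of which lies in the region that was changed), and the opposite-edge pairing at the surviving crossing. I expect the genuine obstacle to be precisely this last piece of bookkeeping: one must track, through each of the two bigon reductions, which arcs incident to $x',x,x''$ lie on strand $i$ and which on strand $j$, and verify that the opposite-edge information at the deleted crossings forces the $i$-with-$i$, $j$-with-$j$ re-gluing in both $\beta'$ and $\beta''$, so that the two local pictures coincide on the nose and not merely as abstract graphs. Granting this, the lemma on $\Gamma$ gives that $\beta'$ and $\beta''$ are strongly equivalent, as claimed.
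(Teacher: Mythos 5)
Your proof is correct and takes essentially the same approach as the paper's (much terser) two-sentence argument: both reduce to the graph criterion for strong equivalence and observe that $x'$, $x$, $x''$ are three consecutive crossings of the same two strands, so that either bigon reduction produces isomorphic graphs $\Gamma(\beta')\cong\Gamma(\beta'')$. The opposite-edge bookkeeping you defer at the end is routine, and the paper does not carry it out either.
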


\begin{proof}
Indeed, it suffices to look at the graph $\Gamma(\beta)$ and see the
three vertices in a sequence of two bigons. The result of bigon
reduction leads to isomorphic graphs.
\end{proof}

\begin{thm}
Every element $b$ of ${\cal F}_{n}$ has an irreducible braid-word
$\beta_{0}$ representing it; all irreducible braid-words
representing $b$ are strongly equivalent.
\end{thm}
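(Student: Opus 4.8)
The plan is to treat bigon reduction as a rewriting system on strong-equivalence classes of braid-words and to apply Newman's lemma. Existence is immediate: starting from any word representing $b$, repeatedly perform bigon reductions; each reduction deletes two classical crossings, and since strong equivalence preserves the number of classical crossings while a bigon reduction strictly decreases it by $2$, the process stops at an irreducible word $\beta_{0}$. As noted above, a bigon reduction does not change the element of ${\cal F}_{n}$, so $\beta_{0}$ represents $b$.

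For uniqueness I would work with strong-equivalence ($\sim$) classes of braid-words and declare $[\beta]\Rightarrow[\beta']$ whenever some representative of $[\beta]$ admits a bigon reduction to some representative of $[\beta']$. This is well defined by the Lemma relating bigons of strongly equivalent words, which also shows that being irreducible is a property of the $\sim$-class. The crucial point is that the equivalence relation on $\sim$-classes generated by $\Rightarrow$ is exactly equality in ${\cal F}_{n}$: every defining relation of ${\cal F}_{n}$ other than $\zeta_{i}^{2}=1$ is a strong-equivalence move (hence the identity on $\sim$-classes), the move $\zeta_{i}^{2}=1$ is a bigon reduction, whose reverse is $\Leftarrow$, and conversely every bigon reduction preserves the element of ${\cal F}_{n}$. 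Moreover $\Rightarrow$ is terminating, since it strictly decreases the nonnegative, $\sim$-invariant classical crossing number. Hence, by Newman's lemma, a terminating and locally confluent relation is confluent, so it suffices to prove local confluence; then each $\Rightarrow$-equivalence class — equivalently, each element of ${\cal F}_{n}$ — has a unique normal form, i.e. a unique $\sim$-class of irreducible representatives. In particular any two irreducible words representing $b$ lie in the same $\Rightarrow$-equivalence class, are both normal forms, and therefore are strongly equivalent, which is the assertion.

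It remains to verify local confluence. Suppose $[\beta]\Rightarrow[\beta_{1}]$ and $[\beta]\Rightarrow[\beta_{2}]$. Using the well-definedness just established, I would first move both reductions onto a single representative $\beta$, so that $\beta_{1}$ and $\beta_{2}$ arise from $\beta$ by reducing bigons $B_{1}$ and $B_{2}$. As two-element sets of crossings, $B_{1}$ and $B_{2}$ either coincide, share exactly one crossing, or are disjoint. If they share a crossing, the Lemma stated immediately before the theorem says the two reduced words are strongly equivalent, so $[\beta_{1}]=[\beta_{2}]$. If $B_{1}$ and $B_{2}$ are disjoint, then no crossing of $B_{1}$ can lie between the two crossings of $B_{2}$ along their common strands — otherwise $B_{2}$ would fail to be a bigon of $\beta$ — so $B_{2}$ remains a bigon after reducing $B_{1}$, and symmetrically; reducing both, in either order, deletes the same four letters and yields one common word $\beta_{12}$, so $[\beta_{1}]\Rightarrow[\beta_{12}]\Leftarrow[\beta_{2}]$. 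This establishes local confluence and finishes the proof.

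The step I expect to be the real heart of the matter is not a computation but the choice of framework — passing to strong-equivalence classes so that the rewriting terminates — together with local confluence; and within local confluence the only delicate case, two bigons meeting at a common crossing, is exactly the content of the preceding Lemma, so the remaining verifications are routine bookkeeping with the graph $\Gamma(\beta)$.
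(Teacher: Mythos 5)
Your proof is correct and follows essentially the same route as the paper: the paper's minimal-counterexample induction on the number of classical crossings is precisely an inlined proof of Newman's lemma, and your local-confluence case analysis (overlapping bigons handled by the preceding Lemma, disjoint bigons handled by a common descendant obtained by deleting all four crossings) matches the paper's cases exactly. Making the rewriting-system framework and the termination measure explicit is a clean way to package the same argument.
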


\begin{proof}
Start with any braid $\beta$ representing $b$ and apply bigon
reductions when possible; when we get an irreducible representative,
denote it by $\beta_{0}$.

We want to prove that all irreducible descendants of every
braid-word are strongly equivalent. Assume there is a counterexample
$\gamma$ which is minimal with respect to the number of classical
crossings.

Assume $\gamma$ has only one bigon and admits only one bigon
reduction $\gamma\to \gamma'$; then all irreducible descendants of
$\gamma$ are irreducible descendants of $\gamma'$. Thus, $\gamma'$
has different descendants and hence $\gamma$ is not minimal.

Now, we assume that there are bigon reductions $\gamma\to \gamma'$
and $\gamma\to \gamma''$ such that $\gamma'$ and $\gamma''$ have
irreducible descendants which are not strongly equivalent. If the
bigons for these two reductions share a vertex then $\gamma'$ and
$\gamma''$ are strongly equivalent, so, all their irreducible
descendants are strongly equivalent.

Now, if the bigon reduction $\gamma\to \gamma'$ is performed at two
crossings $p,q$ and the bigon reduction $\gamma\to \gamma''$ is
performed at two crossings $r,s$ where all crossings $p,q,r,s$ are
distinct, then $\gamma'$ and $\gamma''$ have a common descendant
$\gamma'''$ obtained from $\gamma$ by deleting letters $p,q,r,s$.
Now, all descendants from $\gamma'$ are strongly equivalent to each
other, thus, they are strongly equivalent to all descendants of
$\gamma'''$, and the latter are all strongly equivalent to all
descendants of $\gamma''$. The contradiction completes the proof.

\end{proof}

Thus, Corollary \ref{maincrl} realizes the main principle formulated
in the very beginning of the paper. Namely, if we identify free
braid diagrams which are strongly equivalent, then Theorem
\ref{maincrl} can be reformulated as

\begin{thm}
Let $p$ be a parity for (some class of) free braids. Let $\beta$ be
a free braid for which $p$ is defined. If all crossings of $\beta$
are odd and no bigon reduction can be applied to a braid-word
$\beta$ then every other braid-word $\beta'$ equivalent to it in
${\cal F}_{n}$ contains a subword which is strongly equivalent to
$\beta$.
\end{thm}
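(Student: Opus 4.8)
The plan is to combine the two main results already established: Theorem \ref{maincrl} (the Corollary stating $[\beta]_p=\beta$ for an all-odd $\beta$) and the last Theorem on irreducible braid-words (every element of ${\cal F}_n$ has an irreducible representative, and all such are strongly equivalent). First I would take $\beta$ as in the hypothesis: all classical crossings of $\beta$ are odd with respect to $p$, and $\beta$ admits no bigon reduction. By the remark that the classical second Reidemeister move is a special case of bigon reduction, $\beta$ is in particular irreducible in the sense of the preceding definition. Now let $\beta'$ be any braid-word representing the same element of ${\cal FB}_n$. Applying the one-term bracket and Theorem 1, we get $[\beta]_p=[\beta']_p$ as elements of ${\cal F}_n$; by Corollary \ref{maincrl} the left side equals $\beta$, so $[\beta']_p$ and $\beta$ are equal in ${\cal F}_n$.

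The next step is to extract from this equality in ${\cal F}_n$ the desired \emph{subword} statement. By construction, $[\beta']_p$ is literally obtained from the braid-word $\beta'$ by deleting some letters (the even $\zeta_i$'s); hence $[\beta']_p$ is a subword of $\beta'$ in the syntactic sense. So it suffices to show that $[\beta']_p$, as a braid-word, is strongly equivalent to $\beta$. We know $[\beta']_p = \beta$ in ${\cal F}_n$. Apply bigon reductions to $[\beta']_p$ until it becomes irreducible, obtaining some irreducible braid-word $\gamma$; then $\gamma = \beta$ in ${\cal F}_n$ as well. Both $\gamma$ and $\beta$ are irreducible braid-words representing the same element of ${\cal F}_n$, so by the last Theorem they are strongly equivalent. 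Since strong equivalence preserves the graph $\Gamma$ with all its structure (Lemma on strong equivalence), and since $\gamma$ was obtained from the subword $[\beta']_p$ of $\beta'$ by bigon reductions, the subword $[\beta']_p$ of $\beta'$ is exactly what we need — except that a priori $[\beta']_p$ is strongly equivalent to $\beta$ only after passing through the reductions to $\gamma$.

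This is the point I expect to be the main obstacle: the statement claims $\beta'$ contains a subword \emph{strongly equivalent to} $\beta$, but the obvious subword $[\beta']_p$ need not itself be irreducible, only equal to $\beta$ in ${\cal F}_n$. To handle this cleanly I would argue that if $[\beta']_p$ is strongly equivalent to $\beta$ we are done immediately, and otherwise $[\beta']_p$ admits a bigon reduction; but a bigon reduction applied inside $[\beta']_p$, viewed back inside $\beta'$, corresponds to deleting two further (odd) classical crossings of $\beta'$ which form a bigon after the even $\zeta_i$'s are deleted, so the result is still a subword of $\beta'$ — iterating, we reach an irreducible subword $\gamma$ of $\beta'$ with $\gamma=\beta$ in ${\cal F}_n$, hence $\gamma$ strongly equivalent to $\beta$ by the last Theorem. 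One must be slightly careful that the ``subword'' relation is preserved under these in-place deletions, which it plainly is, and that $\beta$ being itself irreducible guarantees the comparison is with $\beta$ and not merely with some descendant of it; this follows because all irreducible descendants of $\beta$ are strongly equivalent and $\beta$ is one of them. Putting these together yields: $\beta'$ contains the subword $\gamma$, and $\gamma$ is strongly equivalent to $\beta$, which is precisely the assertion.
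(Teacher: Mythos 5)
Your argument is correct and is essentially the paper's own proof: write $[\beta']_p=[\beta]_p=\beta$, note that $[\beta']_p$ is a subword of $\beta'$, and use irreducibility of $\beta$ together with the uniqueness (up to strong equivalence) of irreducible descendants to conclude. You simply spell out the step the paper leaves implicit, namely reducing $[\beta']_p$ by bigon reductions to an irreducible subword $\gamma$ of $\beta'$ and invoking the theorem on irreducible braid-words to get $\gamma$ strongly equivalent to $\beta$.
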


\begin{proof}
Indeed, $[\beta']_{p}=[\beta]_{p}=\beta$. Recalling that
$[\beta']_{p}$ is obtained from $\beta'$ by removing some crossings,
and taking into account that $\beta$ is irreducible, we see that
$\beta$ is strongly equivalent to some subword of $[\beta']_{p}$,
hence, $\beta$ is strongly equivalent to a subword of $\beta'.$
\end{proof}

\begin{rk}
Actually, with some more elaborated techniques (e.g., along the
lines of \cite{KM}), one can prove the same theorem for weaker
condition on crossings on $\beta$. We shall touch on this as well as
on a complete algorithmic recognition of free braids in a subsequent
paper.
\end{rk}

Thus, by looking at $[\beta]_{p}$ we can judge about all possible
words equivalent to $\beta$.

\section{A Corollary}

The invariance of the parity bracket has one important corollary.
For oriented classical, virtual, and free knots there are
principally different types of the second and the third classical
Reidemeister moves.

The second and the third moves which originate from braids look as
shown in Fig.\ref{orient}.

\begin{figure}
\centering\includegraphics[width=200pt]{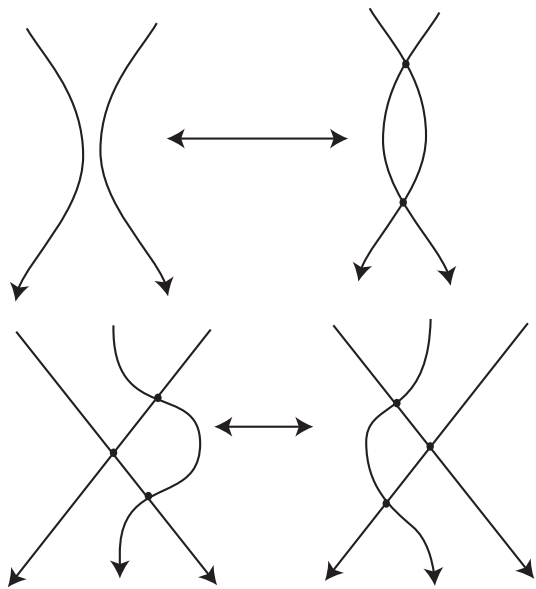}
\caption{Braid-Like Reidemeister Moves} \label{orient}
\end{figure}

Besides them, there are unoriented second and third Reidemeister
moves shown in Fig. \ref{noncoherent}.

\begin{figure}
\centering\includegraphics[width=150pt]{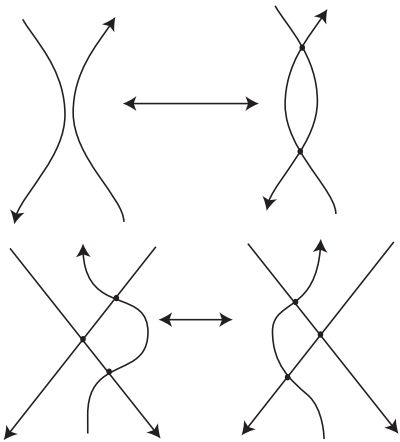}\caption{Unoriented
Reidemeister moves}\label{noncoherent}
\end{figure}

The classical Markov theorem says that closures of two classical
braids  $\beta_{1},\beta_{2}$ yield equivalent links if and only if
$\beta_{2}$ can be obtained from $\beta_{1}$ by  braid moves and the
stabilization move (and its inverse). The stabilization move for an
$n$-strand braid adds one new strand and a crossing between this new
strand and its neighbouring strand.

On the level of diagrams, braid moves are the second and third
Reidemeister moves, and the stabilization move (Markov move) is the
first Reidemeister move. Thus, we can use only braid-like second and
third moves together with the first Reidemeister move.

In the case of free  braids, we have virtual second Reidemeister
moves, virtualizations, far commutativity, virtual and semivirtual
moves. These moves are not interesting because they do not change
the underlying graph and the strong equivalence class.

As for those moves which do change the strong equivalence class, we
have classical second Reidemeister move and classical third
Reidemeister move.

For free knots (as well as for virtual knots and their analogues),
all Reidemeister moves contain unoriented Reidemeister moves as
well.

Unlike the classical case, Markov's theorem for virtual knots and
free knots (see \cite{LR}, \cite{Kamada}, and \cite{MW}) require
some unoriented versions of the second and the third Reidemeister
moves.

Without giving detailed definitions and going into details, we
formulate the following

\begin{thm}
Unoriented Reidemeister moves for free (flat,virtual) links can not
be expressed in terms of braid-like Reidemeister moves, the first
Reidemeister move, the detour move.
\end{thm}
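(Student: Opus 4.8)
The plan is to exhibit a parity $p$ for a suitable class of free braids together with an explicit free braid $\beta$, all of whose crossings are odd and which is irreducible (admits no bigon reduction), such that the closure $cl(\beta)$ is a free link diagram that \emph{does} require an unoriented second or third Reidemeister move in order to be simplified (equivalently, in order to be shown equivalent to a simpler diagram by one of the known Markov-type theorems for free links). Then I argue by contradiction: suppose every unoriented Reidemeister move could be written as a composition of braid-like second and third Reidemeister moves, the first Reidemeister (stabilization) move, and the detour move. Since the one-term bracket $[\cdot]_p$ is invariant under all the braid relations of ${\cal F}_n$ (Theorem~1), and since the first Reidemeister / detour moves either do not touch classical crossings or act on the underlying graph $\Gamma$ in a way that does not create a bigon reduction on $\beta$ itself, the bracket of any diagram reachable from $cl(\beta)$ by the allowed moves still contains a subword strongly equivalent to $\beta$ (this is exactly the content of the reformulated Theorem~5). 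But if the unoriented move \emph{were} expressible through these moves, then applying it to $cl(\beta)$ would also preserve this property — contradicting the fact that the unoriented move genuinely simplifies $cl(\beta)$ past $\beta$.

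First I would make precise which parity to use: for a pure free braid, or more generally for a braid whose permutation lies in the fixed set on which the $Q$-Gaussian parity of the Introduction is defined, take $p$ to be that $Q$-Gaussian parity. I then need one concrete small example: a braid-word $\beta$ on, say, $3$ or $4$ strands whose associated chord diagram $C(cl(\beta\cdot\beta_Q))$ has all chords linked with an odd number of other chords, and which has no bigon — the simplest candidate is the chord diagram in which every pair of chords is mutually linked, realized by a short product of $\zeta_i$'s. For such a $\beta$, Corollary~\ref{maincrl} gives $[\beta]_p=\beta$, and Theorem~5 gives the reproduction property: every braid-word equivalent to $\beta$ in ${\cal F}_n$ contains a subword strongly equivalent to $\beta$.

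Next I would analyze the moves on the right-hand side of the claimed expressibility. Braid-like second and third Reidemeister moves are exactly the relations of ${\cal F}_n$ (this is the geometric interpretation recorded after the Remark on $\tau_{i+1}\tau_i\zeta_{i+1}=\zeta_i\tau_{i+1}\tau_i$), so the bracket is unchanged by them. The detour move and the virtual moves change neither the graph $\Gamma$ nor its strong-equivalence class, hence do not affect the bracket either. The first Reidemeister (stabilization) move adds a single classical crossing that forms a monogon, i.e.\ a bigon with itself in the weak sense — in any case its classical chord is linked with \emph{no} other chord of $\beta$; so after a stabilization the new crossing does not destroy the irreducibility of the $\beta$-part of the diagram, and the $\beta$-subword survives. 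Thus along any sequence of allowed moves, the diagram always contains a subword strongly equivalent to $\beta$; in particular it can never be reduced below $\beta$ in classical-crossing count.

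The main obstacle — and the step I would spend the most care on — is the last one: verifying that $cl(\beta)$, as a free link, genuinely \emph{can} be simplified using an unoriented Reidemeister move, so that the expressibility assumption really does lead to a contradiction. Concretely, I must choose $\beta$ so that $cl(\beta)$ is unknotted (or at least strictly simpler) as a free link, but \emph{only} via a move that is not braid-like, not a stabilization, and not a detour. This is where I would lean on the descriptions of the unoriented second and third moves in Fig.~\ref{noncoherent} and on the fact (already used in the paper) that the braid-like moves plus the first move and detour move do not change the strong equivalence class of $\Gamma$ at all apart from the trivial monogon addition of stabilization — so the only way to kill the chords of $\beta$ is an unoriented move. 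Once the example is pinned down, the contradiction is immediate: the unoriented move removes the obstruction, but every allowed composition preserves it, so the unoriented move cannot be such a composition. The remaining details — drawing the picture, checking oddness of every chord, checking irreducibility — are routine and I would relegate them to the figure and a one-line verification.
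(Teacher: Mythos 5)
Your overall strategy is the right one --- use the one-term parity bracket as an obstruction that survives braid-like moves, the first Reidemeister move and the detour move, and then exhibit one instance of an unoriented move that destroys the obstruction --- and this is indeed how the paper argues. But there is a genuine gap exactly at the point you yourself flag as ``the main obstacle'': the contradiction in your proposal rests on producing a concrete odd, irreducible braid closure $cl(\beta)$ together with a concrete unoriented move that ``simplifies $cl(\beta)$ past $\beta$'', and this is never done. You only describe a candidate chord diagram on $3$ or $4$ strands and then declare the remaining verification routine, but the step that is not routine is precisely the existence and verification of the unoriented-move instance; without it the argument does not close. Moreover, your formulation demands more than is needed: you ask that $cl(\beta)$ be genuinely simplifiable (e.g.\ trivializable) as a free link, and only via non-braid-like moves --- a statement that is itself hard to certify and is nowhere established in your sketch.

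The paper avoids this stronger requirement entirely. It takes an explicit $9$-strand ``brunnian'' braid $\beta$ whose closure is odd with respect to the Gaussian parity and irreducible, so $[cl(\beta)]_{p}=cl(\beta)$ is a single component. It then applies one unoriented second Reidemeister move in the \emph{crossing-adding} direction, producing $\beta'$ whose two new crossings are even; the bracket $[cl(\beta')]_{p}$ then splits into three components, two of which intersect each other. Since the allowed moves (braid-like moves, first Reidemeister move, detour move) can only change the bracket by adding or removing trivial circles, the two bracket values cannot be related by any composition of allowed moves, so the unoriented move is not expressible through them --- no claim that any diagram is ``simplifiable'' is ever needed. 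To repair your proposal you would either have to carry out the missing construction for your stronger statement, or reorganize it along these lines: compare the brackets of the two sides of a single unoriented move directly, with an explicit example in hand.
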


Indeed, one can define the one-term bracket for Gaussian parity for
free knots in a way similar to braids. This bracket is invariant
under braid-like Reidemeister moves and adds one extra component
under the first Reidemeister move.

However, the bracket changes crucially when we perform an unoriented
second Reidemeister move with two even classical crossings.

See Figure \ref{brackRM}.

\begin{figure}
\centering\includegraphics[width=150pt]{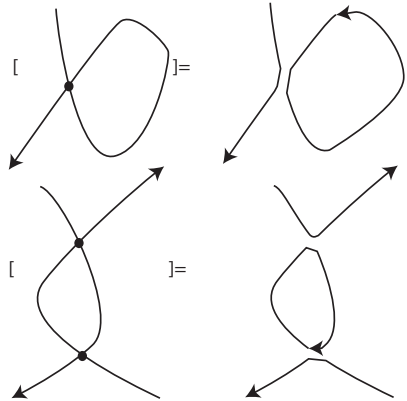} \caption{The
behaviour of the one-term bracket for free knots} \label{brackRM}
\end{figure}

The definition and the invariance proof for braid-like moves are
essentially the same as for the case of braids.

Let $\beta$ be the ``brunnian'' free $n$-strand braid, see
Fig.\ref{brbr}.

\begin{figure}
\centering\includegraphics[width=200pt]{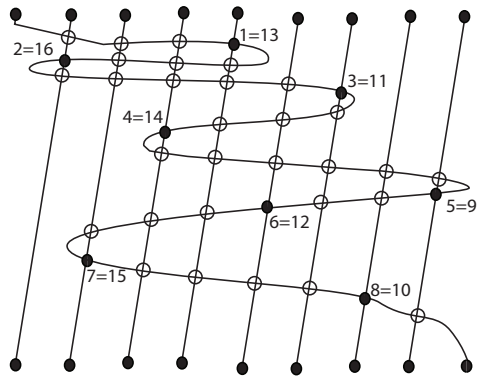}\caption{The
Brunnian Braid $\beta$} \label{brbr}
\end{figure}

The corresponding word is

$$\tau_{1}\tau_{2}\tau_{3}\zeta_{4}\tau_{4}\tau_{3}\tau_{2}\zeta_{1}\tau_{1}\tau_{2}\tau_{3}
\tau_{4}\tau_{5}\zeta_{6}\tau_{6}\tau_{5}\tau_{4}\zeta_{3}\tau_{3}\tau_{4}\tau_{5}\tau_{6}\tau_{7}$$

$$\times
\tau_{8}\zeta_{8}\tau_{7}\tau_{6}\zeta_{5}\tau_{4}\tau_{3}\tau_{2}\zeta_{2}\tau_{3}\tau_{4}\tau_{5}
\tau_{6}\zeta_{7}\tau_{8}.$$

Its permutation is cyclic; let us consider the Gaussian parity $p$
for its closure. One can easily see that all crossings of $\beta$
are odd. Indeed, if we start walking from the upper end of the first
strand, we meet each of the strands $2,\dots, 9$ once; the order of
crossings (each counted twice) is shown in Fig. \ref{brunn}.

Thus, when taking $[\beta]_{p}=[\beta]$ for the Gauusian parity $p$,
and the closure $Cl(\beta)$ is odd and admits no bigon reduction.

Thus, we will have exactly one term in the bracket for the
corresponding free knot.

Now, let us transform the braid by adding a new strand and two new
crossings, \ref{brunn}. This braid is again cyclic (the two new ends
appeared in the left, and the two new crossings are in the bottom
left).

\begin{figure}
\centering\includegraphics[width=200pt]{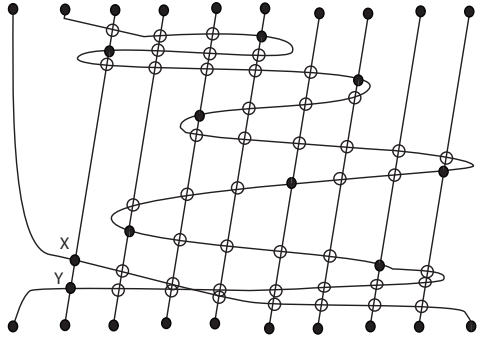}\caption{The
Transformed Braid $\beta'$} \label{brunn}
\end{figure}

It is easy to see that the closure $Cl(\beta')$ differs from the
closure $Cl(\beta)$ by a second Reidemeister move (which is not
braid-like!).

The two added crossings $X,Y$  are both even in the Gaussian parity.
When applying the parity bracket, we see that $[\beta']_{p}$ will
split into $3$ components after closing it up: one component will be
trivial, and two other components will have intersections with each
other. Thus, taking into account that $Cl(\beta)$ is irreducible,
odd and has one component, one can easily see that these bracket can
not be related to each other by bare addition/removal of circles.

I am very grateful to the referee for various useful remarks.

\end{document}